\documentclass[english,reqno]{amsart}
\usepackage{etex}
\usepackage{amsmath,amssymb,amsthm,bbm,mathtools,comment}
\usepackage[shortlabels]{enumitem}
\usepackage[pdftex,colorlinks,backref=page,citecolor=blue]{hyperref}
\hypersetup{pdfpagemode=UseNone,pdfstartview={XYZ null null 1.00}}
\usepackage[mathscr]{euscript}
\usepackage[usenames,dvipsnames]{color}
\usepackage{adjustbox,tikz,calc,graphics,babel,standalone}
\usetikzlibrary{shapes.misc,calc,intersections,patterns,decorations.pathreplacing}
\usetikzlibrary{arrows,shapes,positioning}
\usetikzlibrary{decorations.markings}
\usepackage[final]{microtype}
\usepackage[numbers]{natbib}
\usepackage{cmtiup}
\usepackage{amsfonts}
\usepackage{graphicx}
\usepackage{caption}
\usepackage{subcaption}
\usepackage{verbatim}
\usepackage{array}
\usepackage[frame,cmtip,arrow,matrix,line,graph,curve]{xy}
\usepackage{graphpap, color, pstricks}
\usepackage{pifont}
\usepackage[final]{microtype}
\usepackage{cmtiup}

\setlength{\oddsidemargin}{0in}
\setlength{\evensidemargin}{0in}
\setlength{\marginparwidth}{0in}
\setlength{\marginparsep}{0in}
\setlength{\marginparpush}{0in}
\setlength{\topmargin}{0in}
\setlength{\headsep}{8pt}
\setlength{\footskip}{.3in}
\setlength{\textheight}{8.7in}
\setlength{\textwidth}{6.5in}
\pagestyle{plain}
\linespread{1.2}
\setlength{\parskip}{3pt}
\allowdisplaybreaks

\theoremstyle{plain}
\newtheorem{theorem}{Theorem}[section]

\newtheorem{claim}[theorem]{Claim}

\theoremstyle{remark}
\newtheorem*{remark}{Remark}

\newcommand{\A}{\mathcal{A}}
\newcommand{\B}{\mathcal{B}}

\newcommand{\KN}{[k]^n}

\newcommand{\Z}{\mathbb{Z}}
\newcommand{\E}{\mathbb{E}}
\def\poset{W}
\def\SS{S}
\def\tA{\B}

\DeclareMathOperator{\Aut}{Aut}

\let\emptyset\varnothing

\newcommand{\eps}{\ensuremath{\varepsilon}}

\let\originalleft\left
\let\originalright\right
\renewcommand{\left}{\mathopen{}\mathclose\bgroup\originalleft}
\renewcommand{\right}{\aftergroup\egroup\originalright}

\makeatletter
\def\imod#1{\allowbreak\mkern10mu({\operator@font mod}\,\,#1)}
\makeatother

\title{On symmetric intersecting families of vectors}
\author{Sean Eberhard}
\address{Department of Pure Mathematics and Mathematical Statistics, University of Cambridge, Wilberforce Road, Cambridge CB3\thinspace0WB, UK}
\email{eberhard.math@gmail.com}

\author{Jeff Kahn}
\address{Department of Mathematics, Rutgers University, Piscataway, NJ 08854, USA}
\email{jkahn@math.rutgers.edu}

\author{Bhargav Narayanan}
\address{Department of Mathematics, Rutgers University, Piscataway, NJ 08854, USA}
\email{narayanan@math.rutgers.edu}

\author{Sophie Spirkl}
\address{Department of Mathematics, Rutgers University, Piscataway, NJ 08854, USA}
\email{sophie.spirkl@rutgers.edu}

\date{25 September, 2019}
\subjclass[2010]{Primary 05D05; Secondary 05E18}

\begin{document}

\maketitle
\begin{abstract}
A family of vectors in $\KN$ is said to be \emph{intersecting} if any two of its elements agree on at least one coordinate. We prove, for fixed $k \ge 3$, that the size of a symmetric intersecting subfamily of $\KN$ is $o(k^n)$, which is in stark contrast to the case of the Boolean hypercube (where $k =2$). Our main contribution addresses limitations of existing technology: while there are now methods, first appearing in work of Ellis and the third author, for using spectral machinery to tackle problems in extremal set theory involving symmetry, this machinery relies crucially on the interplay between up-sets, biased product measures, and threshold behaviour in the Boolean hypercube, features that are notably absent in the problem considered here. To circumvent these barriers, introducing ideas that seem of independent interest, we develop a variant of the sharp threshold machinery that applies at the level of products of posets.
\end{abstract}

\section{Introduction}
We pursue a line of investigation initiated by Babai~\citep{babai} and Frankl~\citep{frankl-1} about forty years ago concerning the role of symmetry in extremal set theory. Our starting point is the Erd\H{o}s--Ko--Rado theorem~\citep{EKR}, which asserts that for $n,k \in \mathbb{N}$ with $k < n/2$, the largest families of $k$-subsets of $[n]$ are precisely the trivial ones, namely those that consist of all $k$-sets containing some fixed element of $[n]=\{1,2,\dots ,n\}$.  Many variants and generalisations of this theorem (involving different intersection conditions and different discrete structures such as permutations, vectors and graphs) have since been established. A common theme in this line of enquiry is that the extremal constructions are often highly asymmetric, depending  only on a small number of `coordinates'; see~\citep{HM, FranklHM, AK, moon}, for example. It is therefore natural to ask what happens when one further imposes a symmetry requirement on the  family under consideration, the most natural such requirement being that the family be invariant under some transitive subgroup of the symmetric group $\SS_n$.  Indeed, this direction was proposed by Babai \citep{babai} a few decades back, and has since been rather fruitful; see~\citep{frankl-1, frankl-2} for some classical results, and~\citep{ellis, elkal, keith} for more recent developments.

Here, we study intersecting families of vectors. A family $\A \subset \KN$ is said to be \emph{intersecting} if any two of its elements agree on at least one coordinate. Consideration of the orbits of the natural $\Z/k\Z$ action on $\KN$, i.e., the orbits of the map that shifts each coordinate cyclically by one, shows that any intersecting subfamily of $\KN$ has size at most $k^{n-1}$; furthermore, this bound is tight for the trivial family obtained by specifying the value of some fixed coordinate. These observations go back to Berge~\citep{berge} and Livingtson~\citep{living}, and many (more substantial) generalisations are now known; see~\citep{moon, fz, ft, toku, pach} for a sample of the literature.

Here, again, as in our discussion of families of sets, the above extremal examples are highly asymmetric (membership being determined by a single coordinate), though now with a small caveat: in the Boolean hypercube $[2]^n$ with $n$ odd, the family of vectors with more $1$'s than $2$'s is intersecting, of the maximum possible size $2^{n-1}$, and invariant under \emph{all} of $\SS_n$.  However, this has no counterpart for $k\geq 3$, where, even without symmetry, it is known that the only extremal examples are the trivial ones (see~\citep{living}).  In fact, a little thought suggests a more interesting possibility: might it be true that (for $k\geq 3$) symmetric, intersecting families must be \emph{much} smaller?  Our main purpose here is to show that this is indeed the case.

Before stating a precise result, we repeat, a little more formally the definition of symmetry. As above, we use $[n] $ for $ \{1, 2,\dots, n\}$, and $\SS_n$ for the symmetric group on $[n]$, which acts on $\KN$ in the natural way, namely $(\sigma(x))_i =x_{\sigma(i)}$ for $\sigma \in \SS_n$ and $x \in \KN$. The \emph{automorphism group} of $\A \subset \KN$ is, as usual, $\Aut(\A) = \{\sigma \in \SS_n:\sigma(\A) = \A\}$, and we say $\A \subset \KN$ is \emph{symmetric} if $\Aut(\A)$ is a transitive subgroup of $\SS_n$. Our main result is then as follows.
\begin{theorem}\label{mainthm}
There is a universal constant $c>0$ such that the following holds: for each fixed $k \ge 3$, if $\A \subset \KN$ is symmetric and intersecting, then $|\A| = O(k^n / n^{c/k})$.
\end{theorem}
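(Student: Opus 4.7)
I would follow the overall strategy of Ellis and Narayanan for the Boolean case ($k=2$), replacing each ingredient specific to $\{0,1\}^n$ with an analogue on $[k]^n$; the essential new ingredient is a sharp-threshold result for monotone events in products of finite posets. First I would monotonise the problem: equip $[k]$ with a non-trivial partial order (for instance the two-level poset in which a distinguished element lies below all the others), and consider $[k]^n$ with the induced coordinatewise order. An intersecting family $\A$ is not itself an up-set, so I would pass to a suitable monotone auxiliary event $\tilde\A$ (for example the up-closure of $\A$, or an event of the form ``the random vector agrees on some coordinate with a fixed element of $\A$'') whose density in $[k]^n$ controls $|\A|/k^n$.

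Next I would introduce a one-parameter family of biased product measures $\nu_p$ on $[k]^n$ by placing mass $p$ on the distinguished element of $[k]$ and mass $(1-p)/(k-1)$ on each of the others, so that $\nu_{1/k}$ is the uniform measure and the density of any up-set in the poset product is non-decreasing in $p$. The plan is then to track $\nu_p(\tilde\A)$ as $p$ grows from $1/k$ (where the Berge--Livingston bound together with symmetry pins down $\nu_{1/k}(\tilde\A)$) to a value near $1$ (where $\nu_p(\tilde\A)$ is forced to be essentially $1$).

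The heart of the argument is a sharp-threshold theorem for transitive up-sets in the poset product: if $\B\subset[k]^n$ is an up-set, invariant under a transitive subgroup of $\SS_n$, and has $\nu_p$-density bounded away from $0$ and $1$, then the transition from $\eps$ to $1-\eps$ takes place in a $p$-window of width $O(1/\log n)$. I would prove this by a Bourgain--Friedgut-style argument, combining a Russo--Margulis identity that writes $\tfrac{d}{dp}\nu_p(\B)$ as a sum of coordinate influences defined in the poset sense, a hypercontractive estimate on $L^2([k]^n,\nu_p)$ that lower-bounds the total influence in terms of $\mathrm{Var}(\mathbbm{1}_\B)$, and the transitive symmetry, which equidistributes the influences so that the largest-influence lower bound becomes effective. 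Integrating the resulting derivative bound from $p=1/k$ up to the threshold value converts the $\Omega(\log n)$ transition steepness into a multiplicative $n^{c/k}$ gain over $1/k$ for $\nu_{1/k}(\tilde\A)$, and hence over $k^{n-1}$ for $|\A|$; the exponent $1/k$ reflects the fact that the starting density is only $1/k$ rather than $1/2$.

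The principal obstacle is establishing the sharp-threshold step for general products of posets. In the Boolean setting the Bourgain--Friedgut argument leans on the eigenvalue decomposition of $\{0,1\}^n$ under the noise operator and on coordinate-flipping influences that fit cleanly into this decomposition. In the poset setting the natural notion of ``influence'' at a coordinate is richer---each coordinate carries $k$ states arranged in a poset rather than a single flip---and it is not obvious that the relevant influence spectrum aligns with the eigenspaces of an appropriate noise operator. Identifying the correct poset-theoretic definition of influence, formulating the matching hypercontractive estimate, and ensuring that the resulting Russo--Margulis lower bound is strong enough to yield a polynomial (as opposed to merely $o(1)$) saving are where I expect the genuinely new ideas to lie and the bulk of the technical work to be concentrated.
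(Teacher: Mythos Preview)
Your outline has the right shape---monotonise, interpolate, apply a poset sharp-threshold theorem---but it contains a genuine gap at exactly the point the paper identifies as the obstacle. If you put a partial order on $[k]$ itself (say with a single top element) and take the up-closure $\tilde\A$ of $\A$ in $[k]^n$, then $\tilde\A$ is \emph{not} intersecting: if $x,y\in\A$ agree at coordinate $i$ with value $v\neq 1$, then $x'\succeq x$ may have $x'_i=1$ while $y'_i=v$, so $x'$ and $y'$ need not agree anywhere. (The paper says this explicitly: ``replacing a family by its up-closure preserves symmetries but not the intersection condition.'') Consequently you have no way to bound $\nu_q(\tilde\A)$ from above at any $q$ bounded away from $1/k$, and without such a bound the sharp-threshold machinery is empty: knowing that the threshold window is narrow tells you nothing about where the threshold sits. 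Your appeal to Berge--Livingston is at the \emph{wrong} parameter value---it controls $\nu_{1/k}(\A)$, which is what you are trying to bound, not $\nu_q(\tilde\A)$ for some larger $q$. Your alternative (``agrees with a fixed element of $\A$'') forfeits symmetry.

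The paper's fix is to leave $[k]^n$ entirely and embed it in $W^n$ where $W=[k]^{(1)}\cup[k]^{(k-1)}$ ordered by inclusion. In this larger poset, up-closure \emph{does} preserve an extended notion of intersecting ($x_i\cap y_i\neq\emptyset$ for some $i$), and---the real point---$W$ carries a complementation involution $S\mapsto[k]\setminus S$ under which the midpoint measure $\mu_{1/2}$ is invariant, giving $\mu_{1/2}(\tilde\A)\le 1/2$ immediately. This is the missing anchor for the sharp-threshold argument. Incidentally, the sharp-threshold step you flag as the ``principal obstacle'' is not where the difficulty lies: the BKKKL influence lower bound already holds for arbitrary product probability spaces, so once one has the right Russo--Margulis identity and a domination-with-strength condition, the poset Friedgut--Kalai theorem follows routinely without any new hypercontractivity. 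The genuinely new idea is the covering poset, not the spectral machinery.
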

Perhaps surprisingly, even the simple and natural statement that $\A$ as in the theorem must have size $o(k^n)$ seems resistant to elementary proof, and it may be that the more important point of this work is its contribution to methodology. Giving us a starting point, Ellis and the third author~\citep{ellis}, in resolving a conjecture of Frankl~\citep{frankl-1} on symmetric $3$-wise intersecting families, introduced the use of spectral machinery for tackling problems in extremal set theory involving symmetry; this framework has since been successfully adapted --- see~\citep{elkal, keith} --- to resolve other old extremal problems in the Boolean hypercube involving symmetry constraints. Note, though, that this approach depends crucially on the interplay between up-sets, biased product measures, and `sharp threshold' behaviour, all features absent from the problem under consideration here; for example, all of~\citep{ellis, elkal, keith} start with the elementary observation that the $p$-biased measure of an up-set in $[2]^n$ is monotone increasing in $p$, but even this fact that has no useful analogue in $[k]^n$ for $k \ge 3$. This situation is reminiscent of difficulties occasioned by a lack of useful notions of monotonicity in some probabilistic contexts; see the `all blue' problem of~\citep{eyal} for one particularly egregious example.

Here, one could, for example, try working in $[k]^n$ with the natural product order, but one is then confronted with the following obstacles: compressing an intersecting family `upwards' preserves the intersection condition but not the automorphism group, while replacing a family by its `up-closure' preserves symmetries but not the intersection condition; furthermore, there appears to be no natural analogue in $[k]^n$ of the biased product measures on $[2]^n$ that are at the heart of the arguments of~\citep{ellis, elkal, keith}.

Our (at first unpromising-looking) way around these barriers is to embed $[k]^n$ in a larger `covering space', a suitable product of posets, in which up-closure avoids the above difficulties, and on which appropriate analogues of biased product measures may be constructed that still provide the leverage we need. Having transferred our problem to this larger space, we deduce Theorem~\ref{mainthm} using a suitable variant of the sharp threshold theorem of Friedgut and Kalai~\cite{fk}, based, like the original, on the work of Bourgain, Kahn, Kalai, Katznelson, and Linial~\cite{bkkkl}.

The paper is organised as follows. In Section~\ref{sec:fk-for-posets}, we prove our variant of the Friedgut--Kalai sharp threshold theorem for products of posets; the proof of Theorem~\ref{mainthm} follows in Section~\ref{sec:proof}. Finally, we conclude in Section~\ref{sec:conc} with a brief discussion of what might come next.

\begin{remark} After a preliminary version of this manuscript was circulated, it was brought to our attention that Theorem~\ref{mainthm}, with an estimate of $o(k^n)$ in place of $O(k^n / n^{c/k})$, may be deduced from the results of Dinur, Friedgut and Regev~\citep{dfr} on independent sets in graph powers (from which it follows that any intersecting family in $\KN$ is `essentially contained' in an intersecting `junta'). While the main takeaway of this paper might not be new, we believe that the methodology we adopt --- which we see as our main contribution --- is interesting in its own right, having the potential to be brought to bear on other problems in extremal set theory. Secondary benefits of our technique include both effective bounds, as well as short proofs not reliant on the powerful machinery of~\citep{dfr, fr}.
\end{remark}

\section{Biased measures on products of posets}\label{sec:fk-for-posets}

We now present a general construction that is at the heart of our approach.  In what follows, the reader may find it helpful to keep $p$-biased product measures on the Boolean hypercube in mind.

Let $(\poset, \preceq)$ be a finite poset.  We say that $\A \subset \poset$ is an \emph{up-set} if $x \in \A$ and $x \preceq y$ imply $y \in \A$. 
Recall, for probability measures $\mu_0$ and $\mu_1$ on $\poset$, that $\mu_1$ \emph{(stochastically) dominates} $\mu_0$ if $\mu_1(\A) \geq \mu_0(\A)$ for every up-set $\A \subset \poset$. We extend this, saying that  \emph{$\mu_1$ dominates $\mu_0$ with strength $\kappa$} if
\begin{equation} \label{kappa-condition}
	\mu_1(\A) - \mu_0(\A) \geq \kappa \tag{$\dagger$}
\end{equation}
for every up-set $\A \subset \poset$ other than $\emptyset$ and $\poset$. Given probability measures $\mu_0$ and $\mu_1$ on $\poset$, we consider the interpolation from $\mu_0$ to $\mu_1$ --- our analogue of biased product measures --- obtained by taking $\mu_t = (1-t) \mu_0 + t \mu_1$ to be the measure at `time' $t \in [0,1]$. We need the following variant of the Friedgut--Kalai~\citep{fk} theorem; in what follows, as usual, if $\mu$ is a probability measure on $\poset$, then  $\mu^n$ is the corresponding product measure on $\poset^n$.

\begin{theorem}\label{fk-for-posets}
Assume that $\A \subset \poset^n$ is a symmetric up-set, $\mu_0$ and $\mu_1$ are probability measures on 
$\poset$, and $\mu_1$ dominates $\mu_0$ with strength $\kappa>0$. 
If $0 \leq p < q \leq 1$ and $\mu^n_p(\A), \mu^n_q(\A) \in [\eps, 1-\eps]$, then
	\[
		q - p \leq C \kappa^{-1} \log (1 / 2\eps) / \log n,
	\]
	where $C > 0$ is a universal constant.
\end{theorem}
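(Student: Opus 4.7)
The plan is to run the classical Margulis--Russo plus BKKKL sharp-threshold argument, with the interpolation $\mu_t = (1-t)\mu_0 + t\mu_1$ playing the role of the biased measure on the Boolean cube. Two initial observations set up the calculation. First, $\mu_t - \mu_s = (t-s)(\mu_1 - \mu_0)$ is nonnegative on every up-set of $\poset$ whenever $t > s$, so $\mu_t$ stochastically dominates $\mu_s$; consequently $\mu_t^n$ dominates $\mu_s^n$ on up-sets of $\poset^n$ and in particular $t \mapsto \mu_t^n(\A)$ is nondecreasing. Second, the derivative admits the clean expansion
\[
\frac{d}{dt}\mu_t^n(\A) = \sum_{i=1}^n \E_{y \sim \mu_t^{n-1}}\bigl[\mu_1(\A_{i,y}) - \mu_0(\A_{i,y})\bigr],
\]
where $\A_{i,y} \subset \poset$ is the slice of $\A$ at $y$ in the $i$th coordinate; because $\A$ is an up-set in $\poset^n$, each $\A_{i,y}$ is automatically an up-set in $\poset$.

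The strength-$\kappa$ hypothesis now applies slice-by-slice: the summand vanishes when $\A_{i,y} \in \{\emptyset, \poset\}$ and is at least $\kappa$ otherwise. Setting $I_i(\A,t) := \Prob_{y \sim \mu_t^{n-1}}[\A_{i,y} \notin \{\emptyset, \poset\}]$, I get
\[
\frac{d}{dt}\mu_t^n(\A) \geq \kappa \sum_i I_i(\A,t).
\]
Since $\A$ is invariant under a transitive subgroup of $\SS_n$, all $I_i(\A,t)$ are equal. The BKKKL theorem, applied in the generality of arbitrary finite product probability spaces, then says some (hence every, by symmetry) influence $I_i$ is at least $c \mu_t^n(\A)(1-\mu_t^n(\A)) \log n / n$, so
\[
\frac{d}{dt}\mu_t^n(\A) \geq c\kappa \log n \cdot \mu_t^n(\A)\bigl(1 - \mu_t^n(\A)\bigr).
\]

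To finish, monotonicity forces $\mu_t^n(\A) \in [\eps, 1-\eps]$ throughout $t \in [p,q]$, so dividing by $u(1-u)$ and integrating from $p$ to $q$ yields
\[
c\kappa \log n \cdot (q-p) \;\leq\; \int_{\mu_p^n(\A)}^{\mu_q^n(\A)} \frac{du}{u(1-u)} \;\leq\; 2\log\frac{1-\eps}{\eps},
\]
which rearranges, after absorbing the logarithmic constant, to the claimed bound $q - p \leq C \kappa^{-1} \log(1/2\eps)/\log n$. The main obstacle I anticipate is the invocation of BKKKL itself: the original theorem is phrased for the Boolean cube, but the underlying hypercontractivity ingredient is by now well-understood to pass to arbitrary finite product probability spaces, so once influences are defined via slices as above and transitivity is used to equalise them, the bound transfers without fuss.
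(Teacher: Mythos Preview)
Your argument is correct and follows essentially the same route as the paper: Margulis--Russo for the interpolation $\mu_t$, the strength-$\kappa$ hypothesis applied slice-by-slice to bound the derivative below by $\kappa$ times the total influence, then symmetry plus BKKKL and integration. You are slightly more explicit than the paper in noting monotonicity of $t\mapsto\mu_t^n(\A)$ and in carrying out the final calculus, but the ideas coincide.
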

\begin{proof}
We begin with a variant of the Margulis--Russo formula~\citep{margulis,russo}, namely
	\[
		\frac{d}{dp} \mu_p^n(\A) = \sum_{i=1}^n (\mu_p^{i-1} \times (\mu_1 - \mu_0) \times \mu_p^{n-i})(\A).
	\]

	Next, recall that the \emph{influence} $I_{\A,p}(i)$ of a coordinate $i$ is the probability that, for $x \sim \mu^n_p$, changing the value of $x_i$ can affect whether $x \in \A$, i.e., the probability that the `slice'
	\[
		\A_i(x) = \{w \in \poset \colon (x_1, \dots, x_{i-1}, w, x_{i+1}, \dots, x_n) \in \A\}
	\]
	is neither $W$ nor $\emptyset$. By~\eqref{kappa-condition}, 
	\[
		(\mu_p^{i-1} \times (\mu_1 - \mu_0) \times \mu_p^{n-i})(\A) \geq \kappa I_{\A, p}(i),
	\]
	implying 
	\[
		\frac{d}{dp} \mu_p^n(\A) \geq \kappa \sum_{i=1}^n I_{\A, p}(i).
	\]

	On the other hand, as in~\citep{fk}, symmetry and~\citep{bkkkl} give
	\[
		\sum_{i=1}^n I_{\A,p}(i) = \Omega( \min(\mu_p^n(\A), 1-\mu_p^n(\A)) \log n);
	\]
so, combining, we have
	\[
		\frac{d}{dp} \mu_p^n(\A) = \Omega( \kappa \min(\mu_p^n(\A), 1-\mu_p^n(\A)) \log n).
	\]
The stated inequality now follows by elementary calculus.
\end{proof}

\section{Proof of the main result}\label{sec:proof}

As in Theorem~\ref{mainthm}, we assume $\A \subset \KN$ is symmetric and intersecting, and wish to show that $|\A| = o(k^n)$. In outline, the proof of this fact goes as follows.

\begin{enumerate}
	\item Enlarge $\KN$ to a space $\poset^n$, where $\poset$ is a suitably chosen `covering poset', equipped with an appropriate $\mu_0$ and $\mu_1$.
	\item Use the fact that $\A$ is intersecting to conclude that its up-closure in $W^n$ has $\mu_t$-measure at most 1/2 for a suitable time $t$ (in the interpolation from $\mu_0$ to $\mu_1$).
	\item Deduce from Theorem~\ref{fk-for-posets} that $\A$ must have been vanishingly small in the original space $\KN$.
\end{enumerate}

\begin{proof}[Proof of Theorem~\ref{mainthm}]
Write $[k]^{(r)}$ for the collection of $r$-subsets of $[k]$, and let
$(\poset, \preceq)$ be the poset
	\[
		\poset = [k]^{(1)} \cup [k]^{(k-1)},
	\]
with $\preceq$ defined by inclusion. We embed $[k]$ in $W$ by identifying $[k]$ with $[k]^{(1)}$ in the obvious way.

Let $\mu_0$ and $\mu_1$ be, respectively, the uniform (probability) measures on $[k]^{(1)}$ and
$[k]^{(k-1)}$, and,
as in Section~\ref{sec:fk-for-posets}, set $\mu_t = (1-t)\mu_0 + t\mu_1$, noting that $\mu_{1/2}$ is the uniform measure on $\poset$.

	\begin{claim}
$\mu_1$ dominates $\mu_0$ with strength $1/k$.
	\end{claim}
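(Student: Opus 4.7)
The plan is to reduce the claim to a straightforward combinatorial count. Any up-set $\A \subseteq \poset$ splits uniquely as $\A = S \cup T$, where $S := \A \cap [k]^{(1)}$ and $T := \A \cap [k]^{(k-1)}$, subject to the up-closure constraint that if $\{i\} \in S$ then every $(k-1)$-superset of $\{i\}$ lies in $T$. Since $\mu_0$ and $\mu_1$ are uniform on the two $k$-element levels of $\poset$, we have $\mu_1(\A) - \mu_0(\A) = (|T| - |S|)/k$, so it suffices to verify that $|T| - |S| \geq 1$ whenever $\A \notin \{\emptyset, \poset\}$.

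A short case analysis on $s := |S|$ handles this. If $s = 0$, then $\A \neq \emptyset$ forces $|T| \geq 1$. If $s = 1$, say $S = \{\{i\}\}$, then up-closure forces all $k - 1$ of the $(k-1)$-subsets of $[k]$ containing $i$ into $T$, giving $|T| - s \geq k - 2 \geq 1$ since $k \geq 3$. If $s \geq 2$, pick distinct $\{i_1\}, \{i_2\} \in S$; each $(k-1)$-subset of $[k]$ omits only a single element, so must contain $i_1$ or $i_2$, and up-closure therefore forces $T = [k]^{(k-1)}$, so $|T| = k$. Then $|T| - s = k - s \geq 1$ unless $s = k$, a case excluded because it would give $\A = \poset$.

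There is no serious obstacle here --- the whole argument is just bookkeeping about how up-closure propagates from the bottom level to the top level of $\poset$. It is worth noting, however, that the strength $1/k$ is tight (realised by the up-set generated by a single singleton), so this factor $1/k$ will unavoidably propagate through Theorem~\ref{fk-for-posets} when it is applied downstream. The analysis also makes transparent why $k \geq 3$ is needed: at $k = 2$ the middle case allows $|T| - |S| = 0$, and no positive strength can be extracted.
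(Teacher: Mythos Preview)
Your proof is correct and follows essentially the same case analysis as the paper's, which organises the cases as $\A \subset [k]^{(k-1)}$, $\A \supset [k]^{(k-1)}$, and the remaining ``stars'' (your $s=1$ case with $|T|=k-1$). One small slip in your closing commentary: the strength $1/k$ is indeed tight, but not via the up-set generated by a single singleton --- that star has $|T|-|S|=k-2$, giving $(\mu_1-\mu_0)(\A)=(k-2)/k$; the value $1/k$ is instead realised by a single $(k-1)$-set ($s=0$, $|T|=1$) or by $\poset$ minus one singleton ($s=k-1$, $|T|=k$).
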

	\begin{proof}
		Let $\A \subset \poset$ be a proper, nontrivial up-set. If $\A \subset [k]^{(k-1)}$ or $\A \supset [k]^{(k-1)}$, then it is clear that
		\[
			(\mu_1 - \mu_0)(\A) \geq 1/k.
		\]
		The only other possibilities are the `stars'
		\[
			\A = \{\{i\}\} \cup ( [k]^{(k-1)} \setminus \{[k]\setminus\{i\}\}),
		\]
		for which we have
		\[
			(\mu_1 - \mu_0)(\A) = 1 - 2/k \geq 1/k. \qedhere
		\]
	\end{proof}

	We now extend the notion of an intersecting family to $\poset^n$ by saying that $\A \subset \poset^n$ is \emph{intersecting} if for any $x,y \in \A$, there is some $i \in [n]$ such that $x_i \cap y_i \neq \emptyset$.

	\begin{claim}\label{C3.2}
		If $\A \subset \poset^n$ is intersecting, then $\mu_{1/2}(\A) \leq 1/2$.
	\end{claim}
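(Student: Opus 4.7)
The plan is to use a measure-preserving involution of $(\poset, \mu_{1/2})$ to pair each point of $\poset^n$ with a "disjoint-from-it" partner, and then observe that $\A$ cannot contain both members of such a pair.

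Concretely, I would define the complementation map $c \colon \poset \to \poset$ by $c(x) = [k] \setminus x$. This is well-defined as a map from $\poset$ to itself because complementation swaps $[k]^{(1)}$ with $[k]^{(k-1)}$, it is an involution, and it is measure-preserving for $\mu_{1/2}$ (which is uniform on $\poset$). Extending $c$ coordinatewise to an involution $c \colon \poset^n \to \poset^n$, the product measure $\mu_{1/2}^n$ is preserved, so $\mu_{1/2}^n(c(\A)) = \mu_{1/2}^n(\A)$.

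The crucial observation is that for every $x \in \poset^n$, the points $x$ and $c(x)$ are not intersecting: at every coordinate $i$, $x_i \cap c(x)_i = x_i \cap ([k]\setminus x_i) = \emptyset$. Moreover, since each coordinate satisfies $|x_i| \in \{1, k-1\}$ and $k \geq 3$, we have $x_i \neq [k]\setminus x_i$, so $x \neq c(x)$ — there are no fixed points. Therefore, if $\A$ is intersecting, it cannot contain both $x$ and $c(x)$ for any $x$, which means $\A \cap c(\A) = \emptyset$.

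Combining, $\A$ and $c(\A)$ are disjoint subsets of $\poset^n$ with equal $\mu_{1/2}^n$-measure, hence $2\mu_{1/2}^n(\A) \leq \mu_{1/2}^n(\poset^n) = 1$, giving the claimed bound. There is no real obstacle here; the only thing to verify is that $k \geq 3$ is used precisely to ensure that the involution $c$ is fixed-point free on $\poset$ (and to keep $[k]^{(1)}$ and $[k]^{(k-1)}$ disjoint so that $\poset$ is well-defined as a disjoint union). The role of $k \geq 3$ matches the breakdown of the result for $k = 2$ noted in the introduction.
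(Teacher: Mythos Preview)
Your argument is correct and is essentially the paper's own proof: both use the coordinatewise complementation involution on $\poset^n$, note that it preserves $\mu_{1/2}^n$ (since $\mu_{1/2}$ is uniform on $\poset$), and observe that an intersecting family can contain at most one of $x$ and its complement. Your explicit check that $c$ is fixed-point free is in fact already implied by the disjointness $x_i\cap c(x)_i=\emptyset$ together with $x_i\neq\emptyset$, so it need not be argued separately; the paper simply omits it.
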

	\begin{proof}
		Note that if $x \sim \mu_{1/2}$, then we also have $x^c \sim \mu_{1/2}$, where $x^c = (\bar x_1, \bar x_2, \dots, \bar x_n)$ is the point-wise complement of $x$. Since at most one of $x$ and $x^c$ can belong to $\A$, we have
		\[
			2\mu_{1/2}(\A) = \E [|\A \cap \{x, x^c\}|] \leq 1.\qedhere
		\]
	\end{proof}

We may now finish as follows. Given $\A \subset \KN$ symmetric and intersecting as in Theorem~\ref{mainthm}, let $\tA$ be its up-closure in $\poset^n$ and notice that $\tA$ is again symmetric and intersecting.  Claim~\ref{C3.2} thus gives $\mu_{1/2}(\tA) \leq 1/2$, so, applying Theorem~\ref{fk-for-posets} with $p=0$, $q=1/2$, $\eps = \mu_0(\tA)$ and $\kappa = 1/k$,
we have
	\[
		1/2 \leq C \kappa^{-1} \log(1/2\eps) / \log n,
	\]
or, rearranging, 
	\[
\frac{|\A|}{k^n} = \mu_0(\tA) \leq \frac{n^{-\kappa/2C}}{2} =o(1).\qedhere
	\]
\end{proof}
\section{Conclusion}\label{sec:conc}

The most obvious question raised by the present work is that of estimating more accurately --- beyond the $o(k^n)$ of Theorem~\ref{mainthm} --- how large a symmetric, intersecting subfamily of $[k]^n$ can really be (for $k\geq 3$).  The best examples $\A$ that we know are \emph{set-intersecting}, in the sense that there is a symmetric, intersecting family $\B$ of \emph{subsets} of $[n]$ and an $\ell\in [k]$ such that $x\in [k]^n$ belongs to $\A$ if and only if there is some $B\in\B$ such that $x_i=\ell$ for all $i\in B$. For instance, if $n=q^2+q+1$ with $q$ a prime power, then we may take $\B$ to be the set of lines of the classical projective plane $PG(2,q)$ (see~\citep{vw}, for instance), yielding an $\A$ of size roughly $k^{n-\sqrt{n}}$. Note that the family consisting of all strings with 1's in more than half the coordinates, the counterpart of the exceptional example for $[2]^n$ mentioned in the introduction, does much worse.

It seems possible (but maybe impossible to prove) that the largest symmetric intersecting families in $[k]^n$ are set-intersecting. Failing that, it would be very interesting to at least show that there are constants $c,\delta>0$ (possibly depending on $k$) such that for any symmetric intersecting $\A \subset [k]^n$, we have
\[\log_k |\A| \le n - cn^{\delta}.\]

Finally, we expect that the main technical contribution of this paper --- dealing with intersection problems by situating them in a suitable covering space --- will be applicable to further questions in extremal set theory; we hope to return to this circle of ideas in future work.  

\section*{Acknowledgements}
The second author was supported by NSF grant DMS-1501962, the third by NSF grant DMS-1800521, and the fourth by NSF grant DMS-1802201.

\bibliographystyle{amsplain}
\bibliography{sym_vec_fam}
\end{document}